%
\documentclass[a4paper,11pt]{amsart}
%
%
\usepackage{natbib}
\usepackage{hyperref}
\usepackage{mathscinet}
\usepackage{mathrsfs}

\textwidth 15.05cm \textheight 22.1cm \evensidemargin 0cm
\oddsidemargin 0cm \topmargin 0cm

\setlength{\parindent}{0pt}

\newtheorem{theorem}{Theorem}
\theoremstyle{plain}
\newtheorem{lem}{Lemma}
\newtheorem{proposition}{Proposition}
\theoremstyle{remark}
\newtheorem{rmk}{Remark}
\numberwithin{equation}{section}
\DeclareMathOperator{\Var}{\mathbb{V}ar}
\DeclareMathOperator{\Corr}{\mathbb{C}orr}
\DeclareMathOperator{\Vol}{Vol}
\DeclareMathOperator{\diag}{diag}
\DeclareMathOperator{\diam}{diam}
\newcommand{\ee}{\mathbb E}
\newcommand{\nn}{\mathbb N}
\newcommand{\pp}{\mathbb P}
\newcommand{\rr}{\mathbb R}
\newcommand{\cn}{\mathcal{N}}
\newcommand{\half}{\frac{1}{2}}
\newcommand{\toi}{\to\infty}
\newcommand{\as}[1]{\quad\text{as}\quad #1\toi}

\newcommand{\pto}{\stackrel{\pp}{\to}}
\newcommand{\scal}[2]{\left\langle #1, #2 \right\rangle}
\newcommand{\bs}{\boldsymbol}
\newcommand{\bt}{{\boldsymbol t}}
\begin{document}
\bibliographystyle{plainnat}
\setcitestyle{numbers}
\title[Extremes of multidimensional Gaussian processes]
{Extremes of multidimensional \\Gaussian processes}

\author{K.\ D\polhk{e}bicki}
\address{Mathematical Institute, University of
Wroc\l aw, pl.\ Grunwaldzki 2/4, 50-384 Wroc\l aw, Poland.}
\email{Krzysztof.Debicki@math.uni.wroc.pl}

\author{K.M.\ Kosi\'nski}
\address{E{\sc urandom}, Eindhoven University of Technology, the Netherlands;
Korteweg-de Vries Institute for Mathematics, University of Amsterdam, the Netherlands.}
\email{K.M.Kosinski@uva.nl}
\thanks{The first and third authors thank the Isaac Newton Institute, Cambridge, UK, for hospitality. 
The research of the first and the fourth authors was supported by MNiSW Research Grant N N201 394137 (2009-2011).
The second author thanks the Mathematical Institute, University of Wroc\l aw, Poland, for hospitality.
The research of the second author was supported by NWO grant 613.000.701.}

\author{M.\ Mandjes}
\address{Korteweg-de Vries Institute for Mathematics,
University of Amsterdam, the Netherlands; E{\sc urandom},
Eindhoven University of Technology, the Netherlands; CWI,
Amsterdam, the Netherlands}
\email{M.R.H.Mandjes@uva.nl}

\author{T.\ Rolski}
\address{Mathematical Institute, University of
Wroc\l aw, pl.\ Grunwaldzki 2/4, 50-384 Wroc\l aw, Poland.}
\email{Tomasz.Rolski@math.uni.wroc.pl}

\date{September 7, 2010}
\subjclass[2010]{Primary 60G15; Secondary 60G70}
\keywords{Gaussian process, Logarithmic asymptotics}

\begin{abstract}
This paper considers extreme values attained by a centered, multidimensional Gaussian process
$X(t)= (X_1(t),\ldots,X_n(t))$ minus drift $d(t)=(d_1(t),\ldots,d_n(t))$, on an arbitrary set $T$.
Under mild regularity conditions, we establish the asymptotics of
\[
\log\pp\left(\exists{t\in T}:\bigcap_{i=1}^n\left\{X_i(t)-d_i(t)>q_iu\right\}\right),
\]
for positive thresholds $q_i>0$, $i=1,\ldots,n$ and $u\toi$.
Our findings generalize and extend previously known results for the single-dimensional and two-dimensional cases.
A number of examples illustrate the theory.
\end{abstract}

\maketitle

\section{Introduction}
\label{Intro}
\noindent
Owing to its relevance in various application domains, in the theory of stochastic processes,
substantial attention has been paid to estimating the tail distribution of the maximum value attained.
 In mathematical terms, the setting considered involves an $\rr$-valued stochastic process
$X=\{X(t): t\in T\}$ for some arbitrary set $T$ and a threshold level $u> 0$, where the focus is on characterizing the probability
\begin{equation}
\label{eq:pu}
\pp\left(\sup_{t\in T} X(t)>u\right)=\pp\left(\exists{t\in T}: X(t)>u\right).
\end{equation}
More specifically, the case in which $X$ is a Gaussian process has been studied in detail.
This hardly led to any explicit results for \eqref{eq:pu},
but there is quite a large body of literature on results  for the asymptotic regime in which  $u$ grows large.
The prototype case dealt with a centered Gaussian
process with bounded trajectories for which the {\it logarithmic asymptotics} were found: it was shown that
\begin{equation}
\label{eq:log}
\lim_{u\toi}u^{-2}\log \pp\left(\sup_{t\in T}X(t)>u\right)=-\left(2\sigma_T^2\right)^{-1},
\end{equation}
where
\[
\sigma_T^2:=\sup_{t\in T} \ee X^2(t).
\]
See \citet[p.\ 42]{Adler90} or \citet[Section 12]{Lifshits95} for
this and related results. The monographs \citet{Lifshits95} and
\citet{Piterbarg96} contain more refined results: under appropriate conditions, an explicit  function $\phi(u)$
is given such that the ratio of \eqref{eq:pu} and $\phi(u)$ tends to 1 as $u\toi$
(so-called {\it exact asymptotics}).
The logarithmic asymptotics \eqref{eq:log} can easily be extended to
the case of noncentered Gausssian processes if the mean function is bounded.
The situation gets interesting if both trajectories and the mean function of the process
are unbounded.
In this respect we mention \citet{Duffield95} and \citet{Debicki99}, where the logarithmic asymptotics of $\pp(\sup_{t\ge0} (X(t)-d(t))>u)$ for
general centered Gaussian processes $X$, under some regularity assumptions on the drift function $d$,
were derived; see also \citet{Husler99}, \citet{Dieker05a} and references therein.

While the above results all relate to one-dimensional suprema, considerably less attention has been paid to their multidimensional counterparts.
One of few exceptions is provided by the work of
\citet{Piterbarg05}, who considered the case of two $\rr$-valued, possibly dependent, centered Gaussian processes
$\{X_1(t_1):t_1\in T_1\}$ and $\{X_2(t_2):t_2\in T_2\}$. They found the
logarithmic asymptotics of
\begin{equation}
\label{eq:PS}
\pp(\exists{(t_1,t_2)\in T}: X_1(t_1)>u, X_2(t_2)>u)
\end{equation}
for some $T\subseteq T_1\times T_2$, under the assumption that the trajectories of $X_1$ and $X_2$ are bounded.

In this paper our objective is to obtain the logarithmic asymptotics of (following the convention that vectors are written in bold)
\begin{equation}
\label{goal}
P(u):=\pp \left( \exists{\bs t\in T}:\bigcap_{i=1}^n \{X_i(\bs t)-d_i(\bs t)>q_i u\}\right);
\end{equation}
here
$\{\bs X(\bt):\bt\in T\}$, with $\bs X(\bt)= (X_1(\bt),\ldots,X_n(\bt))'$, is an $\rr^n$-valued centered Gaussian processes defined
on an arbitrary set $T\subseteq \rr^m$, for some $m,n\in\nn$, the $d_i(\cdot)$ are drift functions
and $q_i>0$ are threshold levels, $i=1,\ldots,n$.
Our setup is rich enough to cover both of the cases in which $P(u)$ corresponds to the event
in which (i)~it is required that there is a {\it single} time epoch $t\in\rr$
such that $X_i(t)-d_i(t)>q_i u$ for all $i=1,\ldots,n$ and (ii)~there are $n$ epochs $(t_1,\ldots,t_n)$ such that
$X_i(t_i)-d_i(t_i)>q_i u$ for  all $i=1,\ldots,n$. We get back to this issue in detail in Remark \ref{rem:sim},
where it is also noted that the theory covers a variety of situations between these two extreme situations.

Compared to the one-dimensional setting, the multidimensional case requires various technical complications to be settled.
The derivations of logarithmic asymptotics usually rely on an upper and lower bound,
where the latter is based on the inequality
\[
P(u)\ge
\sup_{{\bs t}\in T}\pp \left(\bigcap_{i=1}^n \{X_i({\bs t})-d_i({\bs t})>q_i u\}\right).
\]
Strikingly, in terms of the logarithmic asymptotics,
this lower bound is actually tight, which is essentially due to the common `large deviations heuristic':
the decay rate of the probability of a union of events coincides with the decay rate of the most likely event among these events.
A first contribution of the present paper is that we show that this argument essentially carries over to the multidimensional setting.
In order to obtain the lower bound one needs asymptotics
of tail probabilities that correspond to multivariate normal distributions. 
In this domain a wealth of results are available (see, e.g., \citet{Hashorva05}
and references therein), but for our purposes we need estimates which are, in some specific sense, uniform. 
A version of such estimates, that is
tailored to our needs, is presented in Lemma \ref{lemma:lower}.

The upper bound is based on what we call a `saddle point equality' presented in Lemma \ref{lemma:saddle}.
It essentially allows us to approximate suprema of multidimensional
Gaussian process $\bs X$ by a specific one-dimensional Gaussian process,
namely a properly weighted sum of the coordinates $X_i$
of $\bs X$. Formally, we identify weights $w_i=w_i(t,u)\ge 0$ such that
the inequality
\[
P(u)\le
\pp\left(\exists{{\bs t}\in T}:\sum_{i=1}^n w_iX_i(\bt)>\sum_{i=1}^n w_i(uq_i+d_i(\bt))\right),
\]
is logarithmically asymptotically exact, as $u\to\infty$.
The reduction of the dimension of the problem allows us to use
one-dimensional techniques
(such as the celebrated Borell inequality).
Interestingly, the optimal weights can be interpreted in terms of
the solution to a convex programming problem that corresponds to an associated Legendre transform of
the covariance matrix of $\bs X$.
A different weighting technique has been developed in
\citet{Piterbarg05} for the case $n=2$, but without a motivation for the weights chosen.
We recover the result from \cite{Piterbarg05} in Remark \ref{rmk:Pit}.
Our analysis of \eqref{goal} extends the results from \cite{Debicki99,Piterbarg05},
in the first place because $\rr^n$-valued Gaussian processes are covered (for arbitrary $n\in\nn$).
The other main improvement relates to the considerable
generality in terms of the drift functions allowed; these were not covered in \cite{Piterbarg05}.

The paper is organized as follows. In Section \ref{MaP} we introduce  notation, describe in detail
objects of main interest to us, and state our main result;
we also pay special attention to the rationale behind the assumptions that we impose. In Section \ref{Ex}
we illustrate the main theorem by presenting a number of examples; one of these relates to
Gaussian processes with regularly varying variance functions. We also explain the potential
application of our result in queueing and insurance theory. In Section \ref{PMT} we
describe how the multidimensional process $\bs X$ can be approximated by a one-dimensional
process $Z$, obtained by appropriately weighting  the coordinates $X_i$. We prove some preliminary results about the
characteristics of the process $Z$. This section also contains the
saddle point equality mentioned above, Lemma \ref{lemma:saddle}, which is the crucial element of the proof of our main result.
Section \ref{PMT}  also contains all other  lemmas needed
to prove Theorem \ref{thm:main}, as well as the proof of our main result itself.

\section{Model, notation, and the main theorem}\label{MaP}
In this section we formally introduce the model, state the main theorem, and provide the
intuition behind the assumptions imposed.

\subsection{Model and notation}
\noindent Let $T\subseteq\rr^m$, for some $m\in\nn$. In this paper we consider
an $\rr^n$-valued (separable) centered Gaussian process $\bs X\equiv\{\bs
X(\bt),\bt\in T\}$ given by $\bs
X(\bt)=(X_1(\bt),\ldots, X_n(\bt))'$.
Let the so-called drift function be denoted by
$\bs d(\bt)=(d_1(\bt),\ldots,d_n(\bt))'$. Now, denote the covariance
matrix of $\bs X(\bt)$ by $\Sigma_{\bt}$. Throughout the paper
it is assumed that the matrix $\Sigma_{\bt}$ is invertible for every $\bt\in T$.
Here and in the sequel, we use the following notation and conventions:
\begin{itemize}
\item[$\cdot$]
We say $\bs v\ge  \bs w$ if $v_i\ge w_i$ for all $i=1,\ldots, n.$
\item[$\cdot$]
We write $\diag(\bs v)$ for the diagonal matrix with $v_i$ on the $i$th position of the diagonal.
\item[$\cdot$]
We define
$\bs v\bs w:=\diag(\bs v) \bs w' =(v_1w_1,\ldots, v_n w_n)'.$
\item[$\cdot$]
For $a\in\rr$, we let $\bs i(a)$ be an $n$-dimensional vector $(a,\ldots,a)'$ and also let
$\bs 0 =(0,\ldots,0)'$.
\item[$\cdot$]
We adopt the usual definitions of norms of vectors
$\|\bs x \|:=(\scal {\bs x} {\bs x})^{1/2}$,
where $\scal\cdot\cdot$ is the Euclidean inner product.
\item[$\cdot$]
We let $f(u)\sim g(u)$ denote that $\lim_{u\toi}f(u)/g(u)=1.$
\item[$\cdot$] We write
$\rr^n_+:=\{\bs x\in\rr^n:\bs x\ge0, \bs x\ne\bs 0\}.$
\end{itemize}
Throughout the paper not all vectors are of dimension $n$ (for instance $\bs t$ is
of dimension $m$), but the above notation should be understood with obvious changes.

With each $\Sigma_{\bt}$ we associate the matrix
$K_{\bt}=(k_{i,j}(\bt))_{i,j\le n}$, defined as
\[
K_{\bt} =\diag(\partial_{1,1}^{-1/2}(\bt),\ldots, \partial_{n,n}^{-1/2}(\bt)) \Sigma_{\bt}^{-1}
\diag(\partial_{1,1}^{-1/2}(\bt),\ldots, \partial_{n,n}^{-1/2}(\bt))
\]
with $\Sigma_{\bt}^{-1}=(\partial_{i,j}(\bt))_{i,j\le n}$.
We mention that $k_{i,j}(\bt)\in[-1,1]$ and that
$-k_{i,j}(\bt)$ is commonly interpreted as some sort of partial correlation between $X_i(\bt)$ and $X_j(\bt)$
controlling all other variables $X_k(\bt)$, $k\neq i,j$.

\subsection{Main result}
Throughout the paper, we impose the following assumptions.

\vspace{2mm}

\noindent {\rm \bf A1}\ \ \ $\sup_{\bt\in T} k_{i,j}(\bt) <1$ for all $i\ne j,$ $i,j=1,\ldots,n.$

\vspace{2mm}

\noindent
{\rm \bf A2}\ \ \ $\sup_{\bt\in T}(X_i(\bt)-\varepsilon d_i(\bt))<\infty$ {a.s. for all} $i=1,\ldots,n$ and all
$\varepsilon\in(0,1]$.

\vspace{2mm}

If a process $\bs X$ and a drift function $\bs d$ comply with assumptions 
{\bf A1}-{\bf A2}, then to shorten the notation, we will write
that $(\bs X,\bs d)$ satisfies {\bf A1}-{\bf A2}.

For a point $\bt\in T$ and a vector $\bs q>\bs 0$, define
\begin{align*}
M_{\bs X,\bs d,\bs q}(u,\bt)&:=\inf_{ \bs v\ge u \bs q} \scal{ \bs v+ \bs d(\bt)}{\Sigma_{\bt}^{-1} ( \bs v+ \bs d(\bt))},\\
M_{\bs X,\bs d,\bs q}(u;T)&:=\half\inf_{\bt\in T}M_{\bs X,\bs d,\bs q}(u,\bt).
\end{align*}

With these preliminaries we are ready to state our main result.
The following theorem can be seen as an $n$-dimensional extension of
\cite[Theorem 1]{Piterbarg05} and  \cite[Theorem 2.1]{Debicki99}.

\begin{theorem}
\label{thm:main}
Assume that $(\bs X,\bs d)$ satisfies {\bf A1}-{\bf A2}. Then, for any $\bs q >\bs 0$,
\begin{equation}
\label{thm:main:eq:2}
\log\pp\left(\exists{\bt\in T}:\bs X(\bt)-\bs d(\bt)>u \bs q\right)
\sim -M_{\bs X,\bs d,\bs q}(u;T)\as u.
\end{equation}
\end{theorem}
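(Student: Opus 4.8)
The plan is to sandwich $\log P(u)$ between two quantities both equivalent to $-M_{\bs X,\bs d,\bs q}(u;T)$. Note first that {\bf A2} with $\varepsilon=1$ gives $\sup_{\bt\in T}(X_i(\bt)-d_i(\bt))<\infty$ a.s., whence $P(u)\to0$ and therefore $M_{\bs X,\bs d,\bs q}(u;T)\toi$; it thus suffices to prove $\log P(u)\ge-M_{\bs X,\bs d,\bs q}(u;T)(1+o(1))$ and $\log P(u)\le-M_{\bs X,\bs d,\bs q}(u;T)(1+o(1))$. Throughout write $\psi(u,\bt):=M_{\bs X,\bs d,\bs q}(u,\bt)$, so that $M_{\bs X,\bs d,\bs q}(u;T)=\half\inf_{\bt\in T}\psi(u,\bt)$, and let $\bs v^\ast=\bs v^\ast(u,\bt)$ denote the unique minimizer of the strictly convex program defining $\psi(u,\bt)$.

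\emph{Lower bound.} Restricting the event to a single time epoch,
\[
P(u)\ge\sup_{\bt\in T}\pp\left(\bs X(\bt)>u\bs q+\bs d(\bt)\right).
\]
For fixed $\bt$ the right-hand side is an orthant tail of the $\cn(\bs 0,\Sigma_{\bt})$ law whose dominating point is $\bs v^\ast+\bs d(\bt)$, so its logarithm equals $-\half\psi(u,\bt)$ up to lower-order corrections. Picking $\bt_u$ with $\psi(u,\bt_u)\le\inf_{\bt\in T}\psi(u,\bt)+1$ and inserting it into the \emph{uniform} multivariate tail estimate of Lemma \ref{lemma:lower}, I obtain $\log P(u)\ge-\half\psi(u,\bt_u)(1+o(1))=-M_{\bs X,\bs d,\bs q}(u;T)(1+o(1))$. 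The uniformity of Lemma \ref{lemma:lower} is what makes this step legitimate, since the infimum over $\bt$ need not be attained and $\bt_u$ drifts with $u$.

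\emph{Upper bound.} The Karush--Kuhn--Tucker conditions for the program defining $\psi(u,\bt)$ identify, at $\bs v^\ast$, a nonnegative weight vector $\bs w=\bs w(u,\bt):=\Sigma_{\bt}^{-1}(\bs v^\ast+\bs d(\bt))\ge\bs 0$; complementary slackness yields the identities $\scal{\bs w}{u\bs q+\bs d(\bt)}=\scal{\bs w}{\Sigma_{\bt}\bs w}=\psi(u,\bt)$, which are precisely the saddle point equality of Lemma \ref{lemma:saddle}. Since $\bs w\ge\bs 0$, the coordinatewise inequalities $X_i(\bt)-d_i(\bt)>q_iu$ force $\scal{\bs w}{\bs X(\bt)-\bs d(\bt)}>\scal{\bs w}{u\bs q}$, so that
\[
P(u)\le\pp\left(\exists{\bt\in T}:\scal{\bs w(u,\bt)}{\bs X(\bt)-\bs d(\bt)}>\scal{\bs w(u,\bt)}{u\bs q}\right).
\]
This reduces the problem to the supremum of a single one-dimensional Gaussian field with variance $\psi(u,\bt)$ and mean $-\scal{\bs w}{\bs d(\bt)}$; by the saddle point equality its exceedance level is calibrated so that at each fixed $\bt$ the logarithm of the corresponding tail is $-\half\psi(u,\bt)\le-M_{\bs X,\bs d,\bs q}(u;T)$.

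The remaining, and principal, difficulty is to upgrade this pointwise estimate to a bound on the supremum over $\bt$, uniformly in the $u$-dependent family of weighted fields. Because the trajectories of $\bs X$ may be unbounded, the drift cannot simply be stripped off. I would instead split $\bs d=\varepsilon\bs d+(1-\varepsilon)\bs d$, retain the fraction $\varepsilon\bs d$ with the field and move $(1-\varepsilon)\bs d$ into the exceedance level: assumption {\bf A2} then makes $\sup_{\bt\in T}(X_i(\bt)-\varepsilon d_i(\bt))<\infty$ a.s.\ for each $i$, while assumption {\bf A1}, by keeping the partial correlations $k_{i,j}(\bt)$ bounded away from $1$, prevents the program from degenerating and so confines the normalized weights $\bs w(u,\bt)/\sqrt{\psi(u,\bt)}$ to a compact set uniformly in $(u,\bt)$. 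Together these dominate the normalized field by a fixed a.s.\ finite Gaussian supremum, to which the Borell--TIS inequality applies and yields $\log P(u)\le-\half\bigl(\sqrt{2M_{\bs X,\bs d,\bs q}(u;T)}-c\bigr)^2+o(M_{\bs X,\bs d,\bs q}(u;T))$ for some finite $c$. Dividing by $M_{\bs X,\bs d,\bs q}(u;T)\toi$ and finally letting $\varepsilon\downarrow0$ gives $\log P(u)\le-M_{\bs X,\bs d,\bs q}(u;T)(1+o(1))$, which together with the lower bound proves \eqref{thm:main:eq:2}.
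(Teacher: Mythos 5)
Your overall architecture coincides with the paper's: the lower bound via restriction to a single epoch $\bt$ together with the \emph{uniform} orthant estimate of Lemma \ref{lemma:lower} is exactly the paper's argument (including your correct observation that uniformity is what handles a non-attained infimum), and the upper bound via the KKT/saddle-point weights $\bs w=\Sigma_{\bt}^{-1}(\bs v^\ast+\bs d(\bt))$ followed by Borell's inequality is also the paper's route; your identities $\scal{\bs w}{u\bs q+\bs d(\bt)}=\scal{\bs w}{\Sigma_{\bt}\bs w}=\psi(u,\bt)$ are correct. The lower bound is complete as written.

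The gap is in the upper bound, at the step you yourself label the principal difficulty. You assert that {\bf A1} ``confines the normalized weights $\bs w(u,\bt)/\sqrt{\psi(u,\bt)}$ to a compact set uniformly in $(u,\bt)$'' and that this, with the $\varepsilon$-split of the drift, dominates the normalized field by a fixed a.s.\ finite Gaussian supremum. Neither claim is proved, and the first does not follow from {\bf A1}: the identity $\scal{\bs w}{\Sigma_{\bt}\bs w}=\psi(u,\bt)$ only gives $\|\Sigma_{\bt}^{1/2}\bs w\|=\sqrt{\psi(u,\bt)}$, so boundedness of $\bs w/\sqrt{\psi(u,\bt)}$ would require $\inf_{\bt\in T}\lambda_{\min}(\Sigma_{\bt})>0$, which is nowhere assumed ({\bf A1} controls the partial correlations $k_{i,j}(\bt)$, not the eigenvalues of $\Sigma_{\bt}$, and in the paper it is used only for the volume estimate in the lower bound). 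Without such control you cannot dominate $\sup_{\bt}\scal{\bs w(u,\bt)}{\bs X(\bt)-\varepsilon\bs d(\bt)}/\sqrt{\psi(u,\bt)}$ by a combination $\sum_i c_i\sup_{\bt}(X_i(\bt)-\varepsilon d_i(\bt))^+$ with constants independent of $(u,\bt)$, so the fixed dominating supremum and the constant $c$ in your Borell bound are not justified; the argument does not close.

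The paper avoids this by normalizing differently: it studies $Y_u(\bt)=\scal{\bs w^\star}{\bs X(\bt)}/\scal{\bs w^\star}{u\bs q+\bs d(\bt)}$, a \emph{centered} field with exceedance level $1$ and variance $1/\psi(u,\bt)$. The normalizer is linear in $\bs w^\star$, so the ratio depends only on the direction of $\bs w^\star$; taking $\|\bs w^\star\|=1$ and bounding the denominator below by $L\min_i(uq_i+\ell_i)/\sqrt n-\|\bs\ell\|$, assumption {\bf A2} alone yields a.s.\ bounded paths and $\sup_{\bt\in T}Y_u(\bt)\pto0$, hence $\ee\sup_{\bt\in T}Y_u(\bt)\to0$ by uniform integrability, after which Borell's inequality gives
\[
\pp\Bigl(\sup_{\bt\in T}Y_u(\bt)>1\Bigr)\le 2\exp\Bigl(-\bigl(1-\ee\sup_{\bt\in T}Y_u(\bt)\bigr)^2M_{\bs X,\bs d,\bs q}(u;T)\Bigr).
\]
To repair your version you should either assume a uniform lower eigenvalue bound on $\Sigma_{\bt}$ (strictly stronger than the paper's hypotheses) or switch to this scale-invariant normalization, which also makes the $\varepsilon$-splitting of the drift unnecessary.
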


\begin{rmk}
\label{rem:sim}
The result stated in Theorem \ref{thm:main} enables us to
analyze, with $T_i\subseteq\rr$,
\begin{eqnarray}
\pp \left(\bigcap_{i=1}^n\left\{\sup_{t_i\in T_i}\left( X_i(t_i)-d_i(t_i) \right)>u q_i\right\}\right).\label{multi_sup}
\end{eqnarray}
To see this, let
$T:=T_1\times\ldots\times T_n$.
Also define processes
$\{Y_i(\bs t):\bs t\in T\}$, $i=1,\ldots, n$, such that
$Y_i(\bs t):= X_i(t_i)$, for $i=1,\ldots,n$.
Analogously, let $m_i(\bs t):= d_i(t_i)$, $i=1,\ldots,n$.
Then \eqref{multi_sup} equals
\[
\pp\left( \exists{\bs t\in T}:\bs  Y (\bs t)-\bs m(\bs t)>u \bs q \right),
\]
which, under the proviso that {\bf A1}- {\bf A2} are complied with by the newly constructed
$(\bs Y,\bs m)$, fits in the framework of Theorem \ref{thm:main}. This example naturally 
extends to the situation where the sets $T_i$ are of dimension higher than 1.
\end{rmk}

\subsection{Discussion of the assumptions}

In this subsection we motivate the assumptions that we imposed.

\begin{rmk}
\label{rmk:mainassum}Assumption {\bf A1} plays a crucial role in the proof of Lemma \ref{lemma:lower}. It can be geometrically interpreted  as follows. For a fixed $\bt\in T$, the distribution of $\bs X(\bt)$
equals that of $B_{\bt}\, \cn$, where $B_{\bt}$ is a matrix such that $\Sigma_{\bt}= B_{\bt}B_{\bt}'$
and $\cn$ is an $\rr^n$-valued standard normal random variable.
For some quadrant $Q_{\bt}$, we need  in the proof of Lemma~\ref{lemma:lower}
a lower estimate of $\pp(\bs X(\bt) \in Q_{\bt})=\pp(\cn\in B_{\bt}^{-1} Q_{\bt})$.
For $i=1,\ldots,n$ let $\bs e_i$ be, as usual, the standard basis vectors of $\rr^n$.
Then the cosine of the angle $\alpha_{i,j}$ between $B_{t}^{-1} \bs e_i$ and $B_{t}^{-1} \bs e_j$ is given by
\[
\cos(\alpha_{i,j})=\frac{\scal {B_{\bt}^{-1} \bs e_i}{B_{\bt}^{-1} \bs e_j}}{\|B_{\bt}^{-1} \bs e_i\|\|B_{\bt}^{-1} \bs e_j\|}
=\frac{\scal {\bs e_i} {\Sigma_{\bt}^{-1} \bs e_j}}{\|B_{\bt}^{-1} \bs e_i\|\|B_{\bt}^{-1} \bs e_j\|}=
\frac{\partial_{i,j}(\bt)}{\sqrt{\partial_{i,i}(\bt)\partial_{j,j}(\bt)}}=k_{i,j}(\bt).
\]
We thus observe that {\bf A1} entails that, for all $\bt\in T$, there is no pair of vector $B_{\bt}^{-1} \bs e_i$ and $B_{\bt}^{-1} \bs e_i$, with $i\ne j$, that
`essentially coincide', i.e., the angles remain bounded away from 0. Therefore, for any $\bs x \in B_{\bt}^{-1} Q_{\bt}$,
one can always find a set $A_{\bt}$ such that $\bs x\in A_{\bt}\subset B_{\bt}^{-1} Q_{\bt}$ and
$A_{\bt}$ has a diameter that is bounded, and a volume that is bounded away from zero, {\it uniformly} in $\bt \in T$.
\end{rmk}

\begin{rmk}
For $\varepsilon=1$, assumption {\bf A2} assures that
the event
\[
\bigcup_{\bt\in T}\{\bs X(\bt)-\bs d(\bt)>u\bs q\}\]
is not satisfied  trivially.
The following example shows that if {\bf A2} is not complied with, then it is not ensured that we remain
in the realm of exponential decay.
Consider a one-dimensional case in which $X\equiv\{X(t):t\ge0\}$ is a standard Brownian motion, and
for any $\delta>0$ let $d(t):=(1+\delta)\sqrt{2t\log\log t}$.
From the law of the iterated logarithm we conclude that the process
$X$ does not satisfy {\bf A2} for every $\varepsilon\in(0,1]$.
On the other hand we have (take $t:=u^4$)
\[
\pp\left(\sup_{t\ge0} \left(X(t)-(1+\delta)\sqrt{2t\log\log t}\right)>u\right)
\ge
\pp\left(\frac{u\cn}{1+(1+\delta)u\sqrt{2\log(4\log u)}}>1\right),
\]
where here $\cn$ is the real-valued standard normal random variable. On the logarithmic
scale the latter probability behaves roughly, for $u$ large, as
\[
-(1+\delta)^2\log\log u.
\]
For the case of $n=1$, {\bf A2} has been required in \cite[Theorem 2.1]{Debicki99} as well.
\end{rmk}

\begin{rmk}
\label{r4}
The drift functions $d_i$, $i=1,\ldots,n$, are not assumed to be increasing, but under assumption {\bf A2} we have
$\ell_i:=\inf_{\bt\in T} d_i(\bt)>-\infty$.
Because we are interested
in the asymptotic behavior of the probability in \eqref{thm:main:eq:2} as $u\toi$,
we can assume that $u>u_0:=-\min_i(\ell_i/q_i)$, and therefore the coordinates of $u \bs q+\bs d(\bt)$
stay positive for all $\bt\in T$. In what follows we shall always assume that $u>u_0$.
\end{rmk}

\section{Examples}
\label{Ex}
\noindent
In this section we present examples that demonstrate the consequences
of Theorem \ref{thm:main}. We focus on computing the decay rate
$M_{\bs X,\bs d,\bs q}(u;T)$ in two cases: (i)~the case of $\bs X$ having bounded sample paths a.s.;
(ii)~the case of the $X_i$ having stationary increments, regularly varying
variance functions, and $d_i(\cdot)$ being linear.
While in the former example the drift functions do not influence the asymptotics,
in the latter example the drifts {\it do have an} impact on the decay rate.

\subsection{Bounded sample paths and drift function}
We here analyze the case of $(\bs X,\bs d)$ satisfying

\vspace{2mm}

{\bf B1}\ \ \
The process $\bs X$ has bounded sample paths a.s.

\vspace{2mm}

{\bf B2}\ \ \
There exists $D<\infty$ such that $|d_i(\bt)|\le D$ for all $\bt\in T$ and $i=1,\ldots,n$.

\vspace{2mm}

We note that under {\bf B1}-{\bf B2}, it trivially holds that
assumption {\bf A2} is complied with as well. Assumptions {\bf B1}-{\bf B2}
are satisfied when $T$ is compact,
$\bs X$ has continuous sample paths a.s. and $\bs d$ is continuous for
instance. Let us introduce the following notation
\[
I_{\bs X,\bs q}(T):=\inf_{\bt\in T}\inf_{\bs v\ge \bs q}\scal {\bs v} {\Sigma_{\bt}^{-1} \bs v}.
\]
The following corollary is an immediate consequence of Theorem \ref{thm:main}.
\begin{proposition}\label{cor:bounded}
Assume that $(\bs X,\bs d)$ satisfies
{\bf A1} and {\bf B1}-{\bf B2}. Then,
\[
\log\pp\left(\exists{\bt\in T}:\bs X(\bt)-\bs d(\bt)>u \bs q \right)
\sim -\frac{u^2}{2}
I_{\bs X,\bs q}(T),\as u.
\]
\end{proposition}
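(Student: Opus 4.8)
**The plan is to derive Proposition \ref{cor:bounded} directly from Theorem \ref{thm:main} by showing that, under {\bf B1}--{\bf B2}, the bounded drift $\bs d$ does not affect the leading-order asymptotics, so that $M_{\bs X,\bs d,\bs q}(u;T)$ reduces to $\tfrac{u^2}{2}I_{\bs X,\bs q}(T)$.** First I would verify that the hypotheses of Theorem \ref{thm:main} are met: {\bf A1} is assumed directly, and as already noted in the excerpt, {\bf B1}--{\bf B2} imply {\bf A2} (a bounded drift subtracted from a process with a.s.\ bounded paths still yields an a.s.\ bounded supremum, for every $\varepsilon\in(0,1]$). Hence the theorem applies and it remains to analyze the quantity $M_{\bs X,\bs d,\bs q}(u;T)$.

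The core of the argument is a rescaling. In the definition
\[
M_{\bs X,\bs d,\bs q}(u,\bt)=\inf_{\bs v\ge u\bs q}\scal{\bs v+\bs d(\bt)}{\Sigma_{\bt}^{-1}(\bs v+\bs d(\bt))},
\]
I would substitute $\bs v=u\bs w$, so that the constraint becomes $\bs w\ge \bs q$ and the objective becomes $u^2\scal{\bs w+\bs d(\bt)/u}{\Sigma_{\bt}^{-1}(\bs w+\bs d(\bt)/u)}$. Factoring out $u^2$, the claim amounts to showing that
\[
\inf_{\bs w\ge\bs q}\scal{\bs w+\bs d(\bt)/u}{\Sigma_{\bt}^{-1}(\bs w+\bs d(\bt)/u)}
\longrightarrow
\inf_{\bs w\ge\bs q}\scal{\bs w}{\Sigma_{\bt}^{-1}\bs w}
\]
as $u\to\infty$, \emph{uniformly} in $\bt\in T$, after which dividing by $u^2$ and taking the infimum over $\bt$ gives $M_{\bs X,\bs d,\bs q}(u;T)\sim\tfrac{u^2}{2}I_{\bs X,\bs q}(T)$. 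Since {\bf B2} gives $\|\bs d(\bt)/u\|\le D\sqrt n/u\to0$ uniformly in $\bt$, the perturbation of the argument vanishes uniformly; the quadratic form is continuous in this argument, so the two infima differ by a vanishing amount.

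The step requiring the most care is making the convergence of the infima genuinely uniform in $\bt$, because $\Sigma_{\bt}^{-1}$ varies with $\bt$ and could, a priori, have eigenvalues that blow up over $T$. The clean way around this is to avoid any claim about $\Sigma_{\bt}^{-1}$ itself and instead work with an explicit two-sided bound: expanding $\scal{\bs w+\bs\delta}{\Sigma_{\bt}^{-1}(\bs w+\bs\delta)}=\scal{\bs w}{\Sigma_{\bt}^{-1}\bs w}+2\scal{\bs\delta}{\Sigma_{\bt}^{-1}\bs w}+\scal{\bs\delta}{\Sigma_{\bt}^{-1}\bs\delta}$ with $\bs\delta=\bs d(\bt)/u$, I would bound the cross and quadratic correction terms and show they are negligible relative to the main term on the relevant region. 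The key observation that keeps everything uniform is that {\bf A1} and the a.s.\ boundedness {\bf B1} already force $I_{\bs X,\bs q}(T)>0$ and, more importantly, they prevent $\Sigma_{\bt}^{-1}$ from degenerating in the directions that matter: the same geometric mechanism described in Remark \ref{rmk:mainassum} (the angles $\alpha_{i,j}$ staying bounded away from $0$, equivalently the vectors $B_{\bt}^{-1}\bs e_i$ remaining uniformly non-collinear) guarantees that the infimum of the quadratic form over $\{\bs w\ge\bs q\}$ is attained at a point of bounded norm with the form bounded above and below uniformly in $\bt$. With these uniform controls in hand, the correction terms are $O(1/u)$ uniformly, and passing to the logarithm and dividing by $u^2$ yields the stated equivalence.
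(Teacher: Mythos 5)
Your overall strategy is the same as the paper's (the paper offers no written proof, calling the proposition an ``immediate consequence'' of Theorem \ref{thm:main}): check \textbf{A2}, invoke the theorem, and show that the bounded drift perturbs $M_{\bs X,\bs d,\bs q}(u;T)$ only at lower order. The first two steps are fine. The gap is in your uniformity argument. You claim that \textbf{A1} and \textbf{B1} force $\inf_{\bs w\ge\bs q}\scal{\bs w}{\Sigma_{\bt}^{-1}\bs w}$ to be ``bounded above and below uniformly in $\bt$'' and prevent $\Sigma_{\bt}^{-1}$ from degenerating. Neither is true: for $n=1$ assumption \textbf{A1} is vacuous and $\inf_{v\ge q}v^2/\sigma^2(t)=q^2/\sigma^2(t)$ is unbounded whenever $\inf_t\sigma^2(t)=0$, which is compatible with bounded paths; and for $n=2$ the matrices $\Sigma_{\bt}^{-1}=\bigl(\begin{smallmatrix}k&1-k\\1-k&k\end{smallmatrix}\bigr)$, $k\toi$, satisfy \textbf{A1} and \textbf{B1} (the variances tend to $1/2$), keep the constrained infimum equal to $2$ for $\bs q=(1,1)'$, yet have $\lambda_{\max}(\Sigma_{\bt}^{-1})=2k-1\toi$ even along a minimizing sequence in $\bt$. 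Consequently your Cauchy--Schwarz bound on the cross term, $|\scal{\bs\delta}{\Sigma_{\bt}^{-1}\bs w}|\le\|\Sigma_{\bt}^{-1/2}\bs\delta\|\,\|\Sigma_{\bt}^{-1/2}\bs w\|$, picks up a factor $\lambda_{\max}(\Sigma_{\bt}^{-1})^{1/2}/u$ that need not vanish uniformly, and the argument as described does not close.

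The fix is elementary and avoids any control of $\Sigma_{\bt}^{-1}$. Substitute $\bs z=\bs v+\bs d(\bt)$, so that $M_{\bs X,\bs d,\bs q}(u,\bt)=g_{\bt}(u\bs q+\bs d(\bt))$ with $g_{\bt}(\bs p):=\inf_{\bs z\ge\bs p}\scal{\bs z}{\Sigma_{\bt}^{-1}\bs z}$. The map $g_{\bt}$ is nondecreasing in $\bs p$ (a larger $\bs p$ shrinks the feasible set) and $2$-homogeneous, $g_{\bt}(c\bs p)=c^2g_{\bt}(\bs p)$ for $c>0$. By \textbf{B2}, with $q_{\min}:=\min_i q_i$ and $\varepsilon_u:=D/(uq_{\min})$ one has $(1-\varepsilon_u)u\bs q\le u\bs q+\bs d(\bt)\le(1+\varepsilon_u)u\bs q$ coordinatewise, uniformly in $\bt$, whence
\[
(1-\varepsilon_u)^2u^2\,g_{\bt}(\bs q)\le M_{\bs X,\bs d,\bs q}(u,\bt)\le(1+\varepsilon_u)^2u^2\,g_{\bt}(\bs q).
\]
Taking the infimum over $\bt\in T$ and halving gives $M_{\bs X,\bs d,\bs q}(u;T)=\tfrac{u^2}{2}I_{\bs X,\bs q}(T)(1+O(1/u))$, and since $I_{\bs X,\bs q}(T)\in(0,\infty)$ (finiteness from any fixed $\bt_0$, positivity because \textbf{B1} bounds $\sup_{\bt}\lambda_{\max}(\Sigma_{\bt})$), the proposition follows. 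Alternatively, your expansion can be repaired for the cross term by using the complementary-slackness facts from Lemma \ref{lemma:saddle} ($\Sigma_{\bt}^{-1}\bs v^{\star}\ge\bs 0$ and $\scal{\Sigma_{\bt}^{-1}\bs v^{\star}}{\bs v^{\star}-\bs q}=0$), but the monotonicity-and-scaling route above is shorter and uses neither \textbf{A1} nor any spectral information.
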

The above proposition states that in the `bounded case' that we are currently considering, we encounter the same asymptotic decay
as in the driftless case ($\bs d\equiv \bs 0$).

\begin{rmk}
\label{rmk:Pit}
Some special cases of Proposition \ref{cor:bounded} have been treated before in the literature.
In particular,
let $X_1\equiv\{X_1(t_1):t_1\in T_1\}$ and $X_2\equiv\{X_2(t_2):t_2\in T_2\}$ be two centered
and bounded $\rr$-valued Gaussian processes. We introduce the notation  $\sigma_i(t_i):=
\sqrt{\Var(X_i(t_i))}$, $r(\bt):=\Corr(X_1(t_1),X_2(t_2))$ and also
\[
c_{\bs q}(\bt):=\min\left\{\frac{q_1}{\sigma_1(t_1)}
\frac{\sigma_2(t_2)}{q_2},\frac{\sigma_1(t_1)}{q_1}\frac{q_2}{\sigma_2(t_2)}\right\}.
\]
Then, upon combining Proposition \ref{cor:bounded} with Remark \ref{rem:sim}, we obtain, with $T\subseteq T_1\times T_2$,
\begin{eqnarray*}
\lefteqn{
\log\pp\left(\exists{(t_1,t_2)\in T}: X_1(t_1)>q_1u,X_2(t_2)>q_2u\right)
}\\
&\sim&
-\frac{u^2}{2}
\inf_{(t_1,t_2)\in T} \frac{1}{\left(\min\left\{\sigma_1(t_1)/q_1,\sigma_2(t_2)/q_2\right\}\right)^2}
\left(1+\frac{(c_{\bs q}(\bt)-r(\bt))^2}{1-r^2(\bt)}1_{\{r(\bt)<c_{\bs q}(\bt)\}}\right),
\end{eqnarray*}
as $u\to\infty$.
Observe that the above formula is also valid for $r(\bt)=\pm1$.
This recovers the result of \citet{Piterbarg05}.
\end{rmk}

\subsection{Stationary increments, linear drift}
\label{EX:rv}
This section focuses on the logarithmic asymptotics of
$\{\bs X(t)-\bs i (t) :t\ge 0\}$, where
$\bs X(t)=S\bs Y(t)$ for some invertible matrix $S$
and, as usual, $\bs Y(t)=(Y_1(t),\ldots,Y_n(t))'$. We assume that,
for $i=1,\ldots, n$,

\vspace{2mm}

{\bf C1} \ \ \ $\{Y_i(t):t\ge0\}$ are mutually independent, $\rr$-valued, centered Gaussian processes with stationary increments.

\vspace{2mm}

{\bf C2} \ \ \ The variance functions $\sigma_i^2(t):=\Var (Y_i(t))$
are regularly varying at $\infty$ with indexes $\alpha_i\in(0,2)$.
Without loss of generality we assume that
$0<\alpha_1\le\ldots\le\alpha_n<2$.
Moreover, assume that there exists $\kappa\in\{1,\ldots,n\}$ such that
$\sigma_1^2\sim\ldots\sim c_\kappa\sigma_\kappa^2$ for some $c_i>0$ and
$\lim_{t\toi}\sigma_{\kappa}(t)/\sigma_{\kappa+1}(t)=0$
(if $\kappa=1$, then set $c_\kappa=1$; if the first condition is satisfied with $\kappa=n$,
then the second one is redundant).

\vspace{2mm}

{\bf C3} \ \ \ $\lim_{t\to 0} \sigma_i^2(t)|\log|t||^{1+\varepsilon}<\infty$ for
some $\varepsilon>0$.

\vspace{2mm}

We analyze
\begin{equation}
\label{multi.reg}
\pp\left(\exists{t\ge 0}:\bs X(t)-\bs i(t)\ge u\bs q\right).
\end{equation}
Probabilities of this type play an important role in risk theory, describing the probability of simultaneous ruin of multiple (dependent) companies;
see \citet{Avram08} for related results.
The one-dimensional counterpart of (\ref{multi.reg})
was considered in \citet{Debicki99} in the context of Gaussian fluid models.
Related examples and further references can be found in the monograph \cite{Mandjes07}.
In the following proposition we derive the logarithmic asymptotics of \eqref{multi.reg}.

With $c_i$ as in {\bf C2}, set
\[
C:=\diag(1,c_2,\ldots,c_\kappa,0,\ldots,0)
\]
and
\[
J(C,S,\bs q,\alpha):=
\inf_{t\ge0}\inf_{ \bs v\ge\bs  q} \frac{\scal{ S^{-1}(\bs v+ \bs i (t))}{CS^{-1} (\bs v+ \bs i(t))}}{t^{\alpha}}.
\]
\begin{proposition}
\label{prop:reg}
Assume that
$\bs Y$ satisfies {\bf C1}-{\bf C3}, and $S$ is
an invertible matrix.
Then, for $\{\bs X(t):t\ge 0\}:=\{S\bs Y(t):t\ge 0\}$,
\[
\log\pp\left(\exists{t\ge 0}:\bs X(t)-\bs i(t)\ge u\bs q\right)\sim -
\frac{u^2}{2\sigma_1^2(u)}J(C,S,\bs q,\alpha_1),\as u.
\]
\end{proposition}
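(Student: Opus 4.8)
The plan is to reduce the statement to an application of Theorem \ref{thm:main} by computing the decay rate $M_{\bs X,\bs d,\bs q}(u;T)$ explicitly for the present setup, where $T=[0,\infty)$, $\bs d(t)=\bs i(t)$, and $\Sigma_t = S\,\diag(\sigma_1^2(t),\ldots,\sigma_n^2(t))\,S'$ by {\bf C1}. The first task is to verify that the hypotheses of Theorem \ref{thm:main} hold, i.e.\ that $(\bs X,\bs i)$ satisfies {\bf A1}--{\bf A2}. Assumption {\bf A1} should follow from the fact that the $Y_i$ are independent (so the partial correlations of $\bs Y$ vanish) together with the invertibility of $S$, which bounds the induced partial correlations of $\bs X$ away from $1$; I would also use {\bf C3} here to control the behavior near $t=0$. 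Assumption {\bf A2} amounts to the almost-sure finiteness of $\sup_{t\ge0}(X_i(t)-\varepsilon\, t)$, which for a Gaussian process with stationary increments and variance regularly varying of index $\alpha_i<2$ is a standard consequence of comparing the sublinear growth of the trajectories (governed by the law of the iterated logarithm / Borell-type estimates) against the linear drift $\varepsilon t$.

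Granting {\bf A1}--{\bf A2}, Theorem \ref{thm:main} gives
\[
\log\pp\left(\exists{t\ge0}:\bs X(t)-\bs i(t)\ge u\bs q\right)\sim -\tfrac12\inf_{t\ge0}\inf_{\bs v\ge u\bs q}\scal{\bs v+\bs i(t)}{\Sigma_t^{-1}(\bs v+\bs i(t))},
\]
so the entire problem becomes the asymptotic analysis of this double infimum as $u\toi$. Substituting $\Sigma_t^{-1}=S'^{-1}\diag(\sigma_i^{-2}(t))S^{-1}$ and writing $\bs y:=S^{-1}(\bs v+\bs i(t))$, the inner quadratic form becomes $\sum_{i=1}^n y_i^2/\sigma_i^2(t)$. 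The key structural input is {\bf C2}: since $\sigma_\kappa(t)/\sigma_{\kappa+1}(t)\to0$, the coordinates $i>\kappa$ carry negligible weight $\sigma_i^{-2}(t)$ relative to the first $\kappa$ coordinates, so after the appropriate rescaling only the block $i\le\kappa$ survives, and there $\sigma_i^2\sim\sigma_1^2/c_i$ (with $c_1=1$) converts the weights into the diagonal matrix $C$. This is precisely the origin of the matrix $C$ in the statement.

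The main analytic step — and the place I expect the real work — is the scaling argument that turns the $u$-dependent double infimum into $\sigma_1^2(u)^{-1}J(C,S,\bs q,\alpha_1)$. I would substitute $t=su$ and $\bs v=u\bs w$ with $\bs w\ge\bs q$, and exploit that $\sigma_1^2$ is regularly varying of index $\alpha_1$, so $\sigma_1^2(su)\sim s^{\alpha_1}\sigma_1^2(u)$ by the uniform convergence theorem for regular variation. Under this scaling $\bs v+\bs i(t)=u(\bs w+\bs i(s))$ factors out a $u^2$, the quadratic form picks up the factor $\sigma_1^{-2}(u)\,s^{-\alpha_1}$, and the surviving block reproduces $\scal{S^{-1}(\bs w+\bs i(s))}{C\,S^{-1}(\bs w+\bs i(s))}/s^{\alpha_1}$, whose infimum over $s\ge0$ and $\bs w\ge\bs q$ is exactly $J(C,S,\bs q,\alpha_1)$. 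The delicate points are justifying the interchange of the limit in $u$ with the double infimum — which requires uniform (in $s$ on compacts, and tail) control of the regular-variation asymptotics, handled via Potter's bounds — and confirming that the contribution of the coordinates $i>\kappa$ genuinely vanishes in the limit rather than merely being dominated pointwise; a domination/lower-bound sandwich using {\bf C2} together with a compactness reduction of the $s$-range should close this gap.
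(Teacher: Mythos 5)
Your proposal is correct and follows essentially the same route as the paper: verify {\bf A1}--{\bf A2} (using independence of the $Y_i$ plus invertibility of $S$ for {\bf A1}, and the a.s.\ sublinear growth of stationary-increment processes with regularly varying variance for {\bf A2}), then rescale $t\mapsto ut$, $\bs v\mapsto u\bs v$ in the double infimum defining $M_{\bs X,\bs i,\bs q}(u;[0,\infty))$ and pass to the limit $\sigma_1^2(u)R_{ut}^{-1}\to t^{-\alpha_1}C$ via the uniform convergence theorem for regular variation. The paper is terser on the uniformity of this passage (it simply invokes the uniform convergence theorem where you propose Potter bounds and a compactness reduction), but the argument is the same.
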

\begin{proof}
We start by checking that {\bf A1}-{\bf A2} are satisfied for
$(\bs X, {\bs i})$.
Indeed, let us note that
the matrix $K_t=K$ is constant.
Besides, since $S$ is invertible, then $K$ is invertible too, which
combined with the fact that $K$ is positive-definite and
$k_{i,i}=1$, straightforwardly implies that assumption {\bf A1} is satisfied.

Since $\bs Y$ has stationary increments, then
under {\bf C1}-{\bf C3} $\lim_{t\toi}Y_i(t)/t= 0$ almost surely and therefore  (using that $\bs X$ consists of linear combinations
of the $Y_i$, $i=1,\ldots,n$) assumption {\bf A2} is complied with; see \cite[Lemma 3]{Dieker05b}  for details.
Now following Theorem \ref{thm:main},
\begin{align*}
M_{\bs X,\bs i,\bs q}(u;[0,\infty))&=\half\inf_{t\ge0}\inf_{\bs v\ge u \bs q} \scal{ S^{-1}(\bs v+\bs i(t))}{R_t^{-1} S^{-1}(\bs v+\bs i(t))}\\
&=
\half\inf_{t\ge0}\inf_{\bs v\ge\bs q} \scal{ S^{-1}(u\bs v+u\bs i(t))}{R_{ut}^{-1} S^{-1}(u\bs v+u\bs i(t))}\\
&=
\frac{u^2}{2}\inf_{t\ge0}\inf_{\bs v\ge\bs q} \scal{ S^{-1}(\bs v+\bs i(t))}{R_{ut}^{-1}S^{-1} (\bs v+\bs i(t))},
\end{align*}
where the matrix $R_{t}^{-1}$ equals $\diag(\sigma_1^{-2}(t),\ldots,\sigma_n^{-2}(t))$, which
is the inverse of the covariance matrix of $\bs Y$.
Using the regular variation of $\sigma_i^2(\cdot)$, we find that, as $u\toi$,
\[
\sigma_1^2(u)R_{ut}^{-1}\to t^{-\alpha_1}C,\as u.
\]
By virtue of the uniform convergence theorem we arrive at
\[
M_{\bs X,\bs i,\bs q}(u;[0,\infty))
\sim
\frac{u^2}{2\sigma_1^2(u)}
\inf_{t\ge0}\inf_{\bs v\ge\bs q} \frac{\scal{ S^{-1}(\bs v+\bs i(t))}{CS^{-1} (\bs v+\bs i(t))}}{t^{\alpha_1}},
\]
as $u\toi$. This completes the proof.
\end{proof}

\section{The proof of the main theorem}
\label{PMT}
This section is devoted to the proof of our main result -- Theorem \ref{thm:main}. 
We will achieve this by establishing an upper bound and
a lower bound.
We start by presenting the following `saddle point equality' that plays a
crucial role in the upper bound.
\begin{lem}
\label{lemma:saddle}
Let $A$ be any positive-definite matrix. Then,
\[
\sup_{\bs w\in\rr^n_+}\frac{\scal { \bs w}{\bs q} ^2}{\scal {\bs w} {A\bs w}}
=
\inf_{\bs v\ge\bs q} \scal {\bs v} {A^{-1} {\bs v}},
\]
for any vector $\bs q\in\rr^n_+$. Moreover, if $\bs v{^\star}$ is the optimizer of the infimum problem in the right-hand side, then
$\bs w{^\star}:=A^{-1}\bs v{^\star}$ is an optimizer of the supremum problem in the left-hand side.
\end{lem}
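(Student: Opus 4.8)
The plan is to establish the two inequalities separately and then combine them; the reverse inequality will simultaneously yield the ``moreover'' part. Throughout I exploit that, since $A$ is positive-definite, the bilinear form $\scal{\bs x}{A\bs y}$ is an inner product on $\rr^n$, so the Cauchy--Schwarz inequality $\scal{\bs x}{A\bs y}^2\le \scal{\bs x}{A\bs x}\,\scal{\bs y}{A\bs y}$ is at my disposal. I also record that the infimum on the right-hand side is attained: the feasible set $\{\bs v:\bs v\ge \bs q\}$ is closed and the objective $\bs v\mapsto \scal{\bs v}{A^{-1}\bs v}$ is continuous and coercive (bounded below by $\lambda_{\min}(A^{-1})\|\bs v\|^2$), so a minimizer $\bs v^\star$ exists; strict convexity makes it unique, though uniqueness will not be needed.

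For the inequality ``$\le$'', I would fix arbitrary $\bs w\in\rr^n_+$ and $\bs v\ge\bs q$. Since $\bs w\ge\bs 0$ and $\bs v-\bs q\ge\bs 0$ componentwise, we have $\scal{\bs w}{\bs v-\bs q}\ge 0$, and also $\scal{\bs w}{\bs q}\ge 0$ because $\bs q\ge \bs 0$; hence $0\le \scal{\bs w}{\bs q}\le \scal{\bs w}{\bs v}$. Squaring and applying Cauchy--Schwarz in the $A$-inner product to the pair $\bs w$ and $A^{-1}\bs v$ (for which $\scal{\bs w}{A(A^{-1}\bs v)}=\scal{\bs w}{\bs v}$) gives
\[
\scal{\bs w}{\bs q}^2\le \scal{\bs w}{\bs v}^2\le \scal{\bs w}{A\bs w}\,\scal{\bs v}{A^{-1}\bs v}.
\]
Dividing by $\scal{\bs w}{A\bs w}>0$ and then taking the infimum over $\bs v\ge\bs q$ and the supremum over $\bs w\in\rr^n_+$ yields the ``$\le$'' direction.

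For ``$\ge$'', I would analyse the (convex, affinely constrained) minimization problem on the right through its Karush--Kuhn--Tucker conditions at the minimizer $\bs v^\star$. Writing the constraints as $q_i-v_i\le 0$ and forming the Lagrangian $\scal{\bs v}{A^{-1}\bs v}+\scal{\bs\lambda}{\bs q-\bs v}$ with multipliers $\bs\lambda\ge\bs 0$, stationarity gives $\bs\lambda=2A^{-1}\bs v^\star$, so that $\bs w^\star:=A^{-1}\bs v^\star=\tfrac12\bs\lambda\ge\bs 0$; moreover $\bs w^\star\ne\bs 0$ since $\bs v^\star\ge\bs q\in\rr^n_+$ forces $\bs v^\star\ne\bs 0$ and $A^{-1}$ is invertible. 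Complementary slackness $\sum_i\lambda_i(q_i-v^\star_i)=0$ reads $\scal{\bs w^\star}{\bs q-\bs v^\star}=0$, i.e.\ $\scal{\bs w^\star}{\bs q}=\scal{\bs w^\star}{\bs v^\star}=\scal{\bs w^\star}{A\bs w^\star}$, the last equality using $\bs v^\star=A\bs w^\star$. Consequently
\[
\frac{\scal{\bs w^\star}{\bs q}^2}{\scal{\bs w^\star}{A\bs w^\star}}=\scal{\bs w^\star}{A\bs w^\star}=\scal{\bs v^\star}{A^{-1}\bs v^\star},
\]
again via $\bs v^\star=A\bs w^\star$. Thus the admissible point $\bs w^\star$ makes the left-hand supremum at least as large as the right-hand infimum, giving ``$\ge$'' and, together with ``$\le$'', both the claimed equality and the fact that $\bs w^\star=A^{-1}\bs v^\star$ is an optimizer of the supremum.

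The main obstacle I anticipate is the rigorous justification of the KKT step: I must confirm that the first-order conditions are genuinely necessary here. This is fine because the constraints are affine, so the linear constraint qualification holds automatically and the KKT conditions are necessary at any minimizer of the convex objective; no Slater-type interior-point hypothesis is required. A secondary point to handle carefully is that $\bs q$ may have vanishing coordinates (as $\rr^n_+$ only demands $\bs q\ge\bs 0$, $\bs q\ne\bs 0$), which is precisely why I verify $\scal{\bs w}{\bs q}\ge0$ before squaring and why I check $\bs w^\star\ne\bs 0$ separately rather than taking it for granted.
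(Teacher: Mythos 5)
Your proposal is correct and follows essentially the same route as the paper: the ``$\le$'' direction via $\scal{\bs w}{\bs q}\le\scal{\bs w}{\bs v}$ for $\bs v\ge\bs q$ combined with Cauchy--Schwarz (the paper phrases this through a factorization $A=BB'$, you through the $A$-inner product, which is the same computation), and the ``$\ge$'' direction via the Lagrangian/complementary-slackness conditions at $\bs v^\star$ showing that $\bs w^\star=A^{-1}\bs v^\star$ attains the value. Your extra care about the constraint qualification and about $\bs w^\star\ne\bs 0$ only makes explicit what the paper leaves implicit.
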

\begin{proof}
Decompose $A=BB'$ for some nondegenerate matrix $B$. Then,
\[
\frac{\scal{\bs w} {\bs q} ^2}{\scal {\bs w} {A\bs w}}=\frac{\scal{\bs w}{\bs q} ^2}{\|B'\bs w\|^2}
\quad\text{and}\quad
\scal{\bs v} {A^{-1}\bs v} = \|B^{-1}\bs v\|^2.
\]
Now, for $\bs w\in\rr^n_+$, the Cauchy-Schwarz inequality yields
\[
\scal{\bs w}{\bs q} =\inf_{\bs v\ge\bs q} \scal{\bs w}{\bs v} = \inf_{\bs v\ge\bs q} \scal { B'\bs w} {B^{-1}\bs v}
\le \|B'\bs w\|\inf_{\bs v\ge\bs q} \|B^{-1}\bs v\|.
\]
Dividing both sides by $\|B'\bs w\|>0$ and optimizing the left-hand side of the previous display, we arrive at
\[
\sup_{\bs w\in\rr^n_+}\frac{\scal {\bs w}{\bs q} ^2}{\scal {\bs w} {A\bs w}}
\le \inf_{\bs v\ge\bs  q} \scal {\bs v} {A^{-1} \bs v}.
\]
To show the opposite inequality, assume that $\bs v{^\star}$ is such that
\[
\inf_{\bs v\ge\bs  q} \scal {\bs v} {A^{-1}\bs v}=
\scal {\bs v{^\star}} {A^{-1} \bs v{^\star}}.
\]
The Lagrangian function of the above problem is given by $L(\bs v,\bs \lambda):=
 \scal {\bs v} {A^{-1}\bs v}-\scal{\bs \lambda} {\bs v-\bs q}$ for $\bs \lambda\ge\bs0$, and
due to complementary-slackness considerations
we necessarily have that $A^{-1}\bs v{^\star}\ge\bs 0$,
and if $(A^{-1}\bs v{^\star})_i>0$, then $v{^\star}_i=q_i$. Thus take $\bs w{^\star}=A^{-1}\bs v{^\star}\in\rr^n_+$,
so that
\[
\frac{\scal {\bs w{^\star}}{\bs q} ^2}{\scal {\bs w{^\star}} {A\bs  w{^\star}}}=
\frac{\scal { A^{-1}\bs v{^\star}}{\bs q} ^2}{\scal { A^{-1}\bs v{^\star}} {\bs v{^\star}}}= \scal {\bs v{^\star}} {A^{-1} \bs v{^\star}}.
\]
Indeed, the last equality is equivalent to
\[
\scal { A^{-1}\bs v{^\star}}{ \bs q-\bs v{^\star}}= 0,
\]
but recall that if $(A^{-1}\bs v{^\star})_i\ne 0$, then $(\bs q-\bs v{^\star})_i=0$. Hence finally,
\[
\sup_{\bs w\in\rr^n_+}\frac{\scal {\bs w}{\bs q} ^2}{\scal {\bs w} {A\bs  w}}\ge\inf_{\bs v\ge\bs  q} \scal {\bs v} {A^{-1}\bs  v},
\]
which proves the opposite inequality. This finishes the proof.
\end{proof}

The main idea behind the proof of the upper bound of Theorem \ref{thm:main}
is that the $\rr^n$-valued process $\bs X(\bt)-\bs d(\bt)$ can be effectively replaced
by a suitably chosen {\it $\rr$-valued} Gaussian process.
The asymptotics of the latter process can then be handled using the familiar techniques for real-valued Gaussian processes.

For any vector $\bs w\in \rr^n_+$, define
\[
Z_{u,\bs w}(\bt):=\frac{\scal{\bs w}{\bs X(\bt)}}{\scal{\bs w}{u\bs q+\bs d(\bt)}},\]
and observe that (with $u>u_0$; cf. Remark \ref{r4})
\[
\pp\left(\exists{\bt\in T}:\bs X(\bt)- \bs d(\bt)>u\bs q \right)
\le
\pp\left(\sup_{\bt\in T} Z_{u,\bs w}(\bt)>1\right).
\]
The vector $\bs w$ in the process $Z_{u,\bs w}$ can be seen as a vector of weights
assigned to the coordinates of $\bs X$. For fixed $u$ and $\bs w$ the process $Z_{u,\bs w}$
is a centered Gaussian process. We shall show that it also has almost surely bounded sample paths.
\begin{lem}
\label{lem:Zuw}
Under {\bf A1}-{\bf A2}, the process $Z_{u,\bs w}$
is a centered Gaussian process with bounded sample paths almost surely,
for each $\bs w\in \rr^n_+$ and $u>u_0$. Moreover,
\[
\sup_{\bt\in T}Z_{u,\bs w}(\bt)\pto0\as u.
\]
\end{lem}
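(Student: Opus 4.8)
The plan is to treat the three claims—centered Gaussianity, a.s.\ boundedness of sample paths, and the convergence $\sup_{\bt\in T}Z_{u,\bs w}(\bt)\pto0$—in turn, with assumption {\bf A2} doing the essential work in the last two.

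First I would settle that $Z_{u,\bs w}$ is a well-defined centered Gaussian process. For fixed $u$ and $\bs w$ the numerator $\scal{\bs w}{\bs X(\bt)}=\sum_{i=1}^n w_iX_i(\bt)$ is a fixed linear combination of the coordinates of the centered Gaussian process $\bs X$, hence itself a centered Gaussian process, and its finite-dimensional distributions stay Gaussian after division by the deterministic denominator $\scal{\bs w}{u\bs q+\bs d(\bt)}$. By Remark \ref{r4}, for $u>u_0$ every coordinate of $u\bs q+\bs d(\bt)$ is strictly positive, so since $\bs w\in\rr^n_+$ the denominator is bounded below by $w_j(uq_j+\ell_j)>0$ for any index $j$ with $w_j>0$; thus $Z_{u,\bs w}$ is centered Gaussian, and separability is inherited from $\bs X$.

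The core step is a uniform-in-$\bt$ upper bound built from {\bf A2}. Writing $\scal{\bs w}{\bs X(\bt)}=\scal{\bs w}{\bs X(\bt)-\varepsilon\bs d(\bt)}+\varepsilon\scal{\bs w}{\bs d(\bt)}$ for an arbitrary $\varepsilon\in(0,1]$ and using $w_i\ge0$ together with {\bf A2}, the first term is bounded above, uniformly in $\bt$, by the a.s.\ finite random variable $a_\varepsilon:=\sum_i w_i\sup_{\bt\in T}(X_i(\bt)-\varepsilon d_i(\bt))$. Setting $Q:=\scal{\bs w}{\bs q}>0$, $\ell:=\sum_i w_i\ell_i$ and $D(\bt):=\scal{\bs w}{\bs d(\bt)}\ge\ell$ (all finite by Remark \ref{r4}), this gives
\[
Z_{u,\bs w}(\bt)\le\frac{a_\varepsilon+\varepsilon D(\bt)}{uQ+D(\bt)},\qquad \bt\in T,
\]
where $uQ+D(\bt)\ge uQ+\ell>0$. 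The right-hand side is a linear-fractional function of the single real variable $x=D(\bt)\in[\ell,\infty)$, monotone because its derivative has the constant sign of $\varepsilon uQ-a_\varepsilon$; a one-line optimization therefore yields $\sup_{\bt\in T} Z_{u,\bs w}(\bt)\le\max\{\varepsilon,(a_\varepsilon+\varepsilon\ell)/(uQ+\ell)\}<\infty$ a.s. Since $Z_{u,\bs w}$ is a separable centered Gaussian process, finiteness of $\sup_{\bt\in T} Z_{u,\bs w}$ implies, via $-Z_{u,\bs w}\de Z_{u,\bs w}$, that $\sup_{\bt\in T}|Z_{u,\bs w}(\bt)|<\infty$ a.s., i.e.\ bounded sample paths.

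Finally, for the convergence I would read the same bound as $u\toi$. For each fixed $\varepsilon$ the quantity $a_\varepsilon$ does not depend on $u$, so $(a_\varepsilon+\varepsilon\ell)/(uQ+\ell)\to0$ a.s.\ and hence $\limsup_{u\toi}\sup_{\bt\in T} Z_{u,\bs w}(\bt)\le\varepsilon$ a.s.; letting $\varepsilon\downarrow0$ along a countable sequence gives $\limsup_{u\toi}\sup_{\bt\in T} Z_{u,\bs w}(\bt)\le0$ a.s. A matching lower bound comes from evaluating at any fixed $\bt_0\in T$: $\sup_{\bt\in T} Z_{u,\bs w}(\bt)\ge Z_{u,\bs w}(\bt_0)=\scal{\bs w}{\bs X(\bt_0)}/\scal{\bs w}{u\bs q+\bs d(\bt_0)}\to0$ a.s., since the numerator is a fixed Gaussian variable and the denominator diverges. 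Thus $\sup_{\bt\in T} Z_{u,\bs w}(\bt)\to0$ a.s., which in particular gives $\sup_{\bt\in T} Z_{u,\bs w}(\bt)\pto0$. The main obstacle is the boundedness step: the whole argument hinges on extracting from {\bf A2}—uniformly over $\varepsilon\in(0,1]$—a single $\bt$-uniform, a.s.\ finite envelope for the numerator, and on carrying the free parameter $\varepsilon$ all the way through so that it can later be sent to $0$ to force the supremum down to $0$.
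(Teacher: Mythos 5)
Your proof is correct, and it rests on the same essential mechanism as the paper's: split $\scal{\bs w}{\bs X(\bt)}$ into $\scal{\bs w}{\bs X(\bt)-\varepsilon\bs d(\bt)}+\varepsilon\scal{\bs w}{\bs d(\bt)}$ and let {\bf A2} supply a $\bt$-uniform a.s.\ finite envelope, with Remark \ref{r4} guaranteeing positivity of the denominator. The execution differs in a worthwhile way, though. The paper argues through tail probabilities: for $L\ge1$ it bounds $\pp(\sup_{\bt}Z_{u,\bs w}(\bt)>L)$ using {\bf A2} with $\varepsilon=1$ (letting $L\toi$ for boundedness, $u\toi$ for the tail), and for $L<1$ it reuses the same manipulation with $\varepsilon=L$ to get $\pp(\sup_{\bt}Z_{u,\bs w}(\bt)>L)\to0$, which is exactly convergence in probability. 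You instead work pathwise, optimizing the linear-fractional bound in $D(\bt)$ to get the explicit a.s.\ inequality $\sup_{\bt}Z_{u,\bs w}(\bt)\le\max\{\varepsilon,(a_\varepsilon+\varepsilon\ell)/(uQ+\ell)\}$, and then send $u\toi$ and $\varepsilon\downarrow0$ along a countable sequence. This buys a strictly stronger conclusion (almost sure convergence of the supremum to $0$, with a matching pointwise lower bound at a fixed $\bt_0$), of which the stated convergence in probability is an immediate consequence; the paper's route is marginally shorter because it never needs to optimize over the range of $D(\bt)$ and only ever invokes {\bf A2} at the specific values $\varepsilon=1$ and $\varepsilon=L$. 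Both arguments use {\bf A2} for infinitely many $\varepsilon$ in the end, so neither is more economical in hypotheses, and your symmetry remark ($-Z_{u,\bs w}\de Z_{u,\bs w}$) correctly upgrades finiteness of the supremum to two-sided boundedness of the paths.
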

\begin{proof}
Without loss of generality we can assume that $\|\bs w\|=1$.
For any $L\ge1$, recalling the definition of $\bs\ell$ from Remark \ref{r4},
\begin{eqnarray*}
\lefteqn{\hspace{-1cm}
\pp\left(\sup_{\bt\in T} Z_{u,\bs w}(\bt)>L\right)=
\pp\left(\exists{\bt\in T}: \scal{\bs w}{\bs X(\bt)}>\scal{\bs w}{Lu\bs q+L\bs \ell+L(\bs d(\bt)-\bs \ell)}\right)}\\
&\le&
\pp\left(\exists{\bt\in T}: \scal{\bs w}{\bs X(\bt)}>\scal{\bs w}{L(u\bs q+\bs \ell)+(\bs d(\bt)-\bs \ell)}\right)\\
&\le&
\pp\left(\exists{\bt\in T}: \scal{\bs w}{\bs X(\bt)-\bs d(\bt)}>\scal{\bs w}{L(u\bs q+\bs \ell) -\bs \ell}\right)\\
&\le&
\pp\left(\sum_{i=1}^nw_i\sup_{\bt\in T}(X_i(\bt)-d_i(\bt))>
L\scal{\bs w}{u\bs q+\bs \ell}-\scal {\bs w}{\bs \ell}\right)\\
&\le&
\pp\left(\sum_{i=1}^n\sup_{\bt\in T}(X_i(\bt)-d_i(\bt))^+>
L\min_i(uq_i+\ell_i)/\sqrt n -\|\bs \ell\|\right),
\end{eqnarray*}
where the last probability tends to zero with $L\toi$ due to {\bf A2}.
This proves that $Z_{u,\bs w}$ has bounded sample paths almost surely.

The last probability also tends to zero with $L\ge 1$ fixed and $u\toi$. On the other hand,
for any $L<1$ we have
\begin{align*}
\pp\left(\sup_{\bt\in T} Z_{u,\bs w}(\bt)>L\right)&=
\pp\left(\exists{\bt\in T}: \scal{\bs w}{\bs X(\bt)-L\bs d(\bt)}>L\scal{\bs w}{u\bs q}\right)\\
&\le
\pp\left(\sum_{i=1}^n\sup_{\bt\in T}(X_i(\bt)-Ld_i(\bt))^+>uL\min_i q_i/\sqrt n\right),
\end{align*}
where the last probability tends to zero with $u\toi$ by virtue of {\bf A2}.
We therefore have that $\sup_{\bt\in T} Z_{u,\bs w}(\bt)$ converges to $0$ in probability.
\end{proof}

The above considerations remain true even if $\bs w$ depends on $u$ and $\bt$. 
This observation allows us to optimize the variance of the
process $Z_{u,\bs w}$, while retaining its sample path properties. Notice that
\[
\Var (Z_{u,\bs w}(\bt))=\frac{\scal{\bs w} {\Sigma_{\bt}\bs w}}{\scal{\bs w} {u\bs q +\bs d(\bt)}^2}.
\]
Therefore, take $\bs w{^\star}\equiv \bs w{^\star}(u,\bt)$ such that
\begin{equation}
\label{eq:w}
\frac{\scal {\bs w{^\star}} {\Sigma_\bt \bs w{^\star}}}{\scal {\bs w{^\star}} {u\bs q +\bs d(t)}^2}=
\inf_{\bs w\in \rr^n_+}\frac{\scal {\bs w} {\Sigma_{\bt}\bs w}}{\scal {\bs w} {u\bs q +\bs d(\bt)}^2}
\end{equation}
and denote by
$Y_u(\bt)$ the process $Z_{u,\bs w{^\star}}(\bt)$ with the weights $\bs w=\bs w{^\star}$ chosen as above.
Let $\sigma_u^2(\bt)$ be the variance function of the process $Y_u(\bt)$. Then,
by Lemma \ref{lemma:saddle},
\begin{equation}
\label{eq:Yvar}
\sigma_u^{-2}(\bt)=M_{\bs X,\bs d,\bs q}(u,\bt).
\end{equation}

To estimate the tail of the supremum of the process $Y_u(\bt)$ we intend to use Borell's inequality \cite[Theorem 2.1]{Adler90}.
To apply this result, we need to verify that the expectation of $\sup_{\bt\in T} Y_u(\bt)$ vanishes as $u\toi$.
This is done in the next lemma.
\begin{lem}
\label{lem:impl}
Under {\bf A1}-{\bf A2}, with $u_0$ as in Remark \ref{r4},
\begin{enumerate}
\item $M_{\bs X,\bs d,\bs q}(u;T)>0$ for each $u>u_0$;
\item $\lim_{u\toi} M_{\bs X,\bs d,\bs q}(u;T)=\infty$;
\item $\lim_{u\toi} \ee \sup_{\bt\in T}Y_u(\bt)=0$.
\end{enumerate}
\end{lem}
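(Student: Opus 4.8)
The plan is to read all three statements through the variance of the auxiliary process $Y_u$. By \eqref{eq:Yvar} we have $\sigma_u^{-2}(\bt)=M_{\bs X,\bs d,\bs q}(u,\bt)$, so, writing $s_u^2:=\sup_{\bt\in T}\sigma_u^2(\bt)$,
\[
M_{\bs X,\bs d,\bs q}(u;T)=\half\inf_{\bt\in T}\sigma_u^{-2}(\bt)=\frac{1}{2s_u^2}.
\]
Consequently statement (1) is equivalent to $s_u^2<\infty$ and statement (2) to $s_u^2\to0$ as $u\toi$, and both facts I would read off from the sample-path behaviour of $Y_u$ recorded in Lemma \ref{lem:Zuw}. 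The remark made immediately after that lemma is essential here: its conclusions, namely the almost sure finiteness of $\sup_{\bt\in T}Y_u(\bt)$ and the convergence $\sup_{\bt\in T}Y_u(\bt)\pto0$, remain valid for $Y_u=Z_{u,\bs w{^\star}}$ even though the weights $\bs w{^\star}=\bs w{^\star}(u,\bt)$ depend on $\bt$, and for each fixed $u$ the process $Y_u$ is still centered Gaussian.

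For (1) I would invoke the standard fact that a centered Gaussian process with an almost surely finite supremum has bounded variance. If $s_u^2=\infty$, pick $\bt_k$ with $\sigma_u^2(\bt_k)\toi$; then for every level $a$ and every $k$ one has $\pp(\sup_{\bt\in T}Y_u(\bt)>a)\ge\pp(\cn>a/\sigma_u(\bt_k))$, and letting $k\toi$ gives $\pp(\sup_{\bt\in T}Y_u(\bt)>a)\ge\half$ for every $a$, hence $\pp(\sup_{\bt\in T}Y_u(\bt)=\infty)\ge\half$, contradicting Lemma \ref{lem:Zuw}. Thus $s_u^2<\infty$ and $M_{\bs X,\bs d,\bs q}(u;T)>0$ for every $u>u_0$. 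For (2) I would run the same single-point comparison in the opposite direction: if $s_{u_k}^2$ stayed bounded away from $0$ along some $u_k\toi$, I would choose $\bt_k$ with $\sigma_{u_k}^2(\bt_k)$ bounded away from $0$, so that $\pp(\sup_{\bt\in T}Y_{u_k}(\bt)>\varepsilon)\ge\pp(Y_{u_k}(\bt_k)>\varepsilon)$ would stay bounded away from $0$, contradicting $\sup_{\bt\in T}Y_u(\bt)\pto0$. Hence $s_u^2\to0$, that is, $M_{\bs X,\bs d,\bs q}(u;T)\toi$.

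The substantive step is (3), and I expect the passage from the in-probability statement to convergence of the deterministic expectation $a_u:=\ee\sup_{\bt\in T}Y_u(\bt)$ to be the main obstacle; this is precisely where Borell's inequality enters. Write $M_u:=\sup_{\bt\in T}Y_u(\bt)$. Parts (1) and (2) give $s_u^2<\infty$ with $s_u^2\to0$, while almost sure boundedness gives $a_u<\infty$, so Borell's inequality \cite[Theorem 2.1]{Adler90} yields $\pp(|M_u-a_u|\ge x)\le2\exp(-x^2/(2s_u^2))$ for every $x>0$. Since $s_u^2\to0$, the right-hand side tends to $0$ for each fixed $x$, whence $M_u-a_u\pto0$; combined with $M_u\pto0$ from Lemma \ref{lem:Zuw} this gives $a_u=M_u-(M_u-a_u)\pto0$. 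As $a_u$ is a constant, $a_u\to0$, which is exactly (3). Concretely, for fixed $\varepsilon>0$ the event $\{|M_u-a_u|\le\varepsilon/2\}\cap\{|M_u|\le\varepsilon/2\}$ has probability tending to $1$ and on it $|a_u|\le\varepsilon$, so $\limsup_u|a_u|\le\varepsilon$, and $\varepsilon$ was arbitrary.
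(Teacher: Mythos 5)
Your proposal is correct. Parts (1) and (2) follow the paper's own route: the paper likewise reads (1) off from the a.s.\ boundedness of $Y_u$ via $\sup_{\bt\in T}\sigma_u^2(\bt)<\infty$, and proves (2) through the single-point lower bound $\pp(\sup_{\bt\in T}Y_u(\bt)>1)\ge\pp\bigl(\cn>\inf_{\bt\in T}\sqrt{M_{\bs X,\bs d,\bs q}(u,\bt)}\bigr)$ combined with $\sup_{\bt\in T}Y_u(\bt)\pto 0$; your contradiction arguments are just these inequalities spelled out. The only genuine divergence is in (3). The paper asserts that $\ee\sup_{\bt\in T}Y_u(\bt)<\infty$ and that the family $(\sup_{\bt\in T}Y_u(\bt))_u$ is uniformly integrable (``it easily follows''), and then concludes from convergence in probability to $0$; the implicit justification of that uniform integrability is again Gaussian concentration, but the paper does not spell it out. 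You instead apply the Borell--TIS inequality directly around the mean: $\pp(|M_u-a_u|\ge x)\le 2\exp(-x^2/(2s_u^2))$ with $s_u^2\to0$ gives $M_u-a_u\pto0$, and together with $M_u\pto0$ the deterministic sequence $a_u$ must tend to $0$. This is a clean, self-contained substitute that avoids the uniform-integrability step altogether and arguably patches the one place where the paper's argument is terse; the paper's route, once the UI claim is justified, yields the slightly stronger $L^1$ convergence of $M_u$. Both arguments ultimately rest on the same concentration inequality, so nothing essential is gained or lost either way.
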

\begin{proof}
From Lemma \ref{lem:Zuw} we know that for a fixed $u$ the process
$Y_u$ has bounded sample paths almost surely. This implies that
$\sup_{\bt\in T} \sigma_u^2(\bt)<\infty$. But
\[
\sup_{\bt\in T}\sigma_u^2(\bt)=\sup_{\bt\in T}(M_{\bs X,\bs d,\bs q}(u,\bt))^{-1}=\half(M_{\bs X,\bs d,\bs q}(u;T))^{-1}
\]
and claim (1) follows.

The proof of (2) is a consequence of the fact that under {\bf A2}
\[
\pp\left(\sup_{\bt\in T} Y_u(\bt)>1\right)\to0\as u,
\]
and for $\cn$ being a standard normal random variable
\[
\pp\left(\sup_{\bt\in T} Y_u(\bt)>1\right)\ge
\sup_{\bt\in T}\pp\left(Y_u(\bt)>1\right)=
\pp\left(\cn>\inf_{\bt\in T} \sqrt{M_{\bs X,\bs d,\bs q}(u,\bt)}\right).
\]

To prove the last claim, observe that the
almost sure boundedness of sample paths of $Y_u(\bt)$ implies
that $\ee\sup_{\bt\in T} Y_u(\bt)<\infty$ and it
easily follows that the family $(\sup_{\bt\in T}Y_u(\bt))_u$ is uniformly integrable.
Now claim (3) follows from the second part of Lemma \ref{lem:Zuw}.
\end{proof}

Before we proceed to the proof of Theorem \ref{thm:main} we state a technical lemma,
which is a prerequisite for the proof of the lower bound.
\begin{lem}
\label{lemma:lower}
Under {\bf A1}, there exist constants $C_1<\infty$, $C_2>0$ such that for any $\bt\in T$
\[
\log\pp\left(\bs X(\bt)-\bs d(\bt)>u\bs q\right)\ge -\half M_{\bs X,\bs d,\bs q}(u,\bt)-C_1 M_{\bs X,\bs d,\bs q}^{1/2}(u,\bt)+C_2.
\]
\end{lem}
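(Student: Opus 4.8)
The plan is to reduce the left-hand side to a Gaussian orthant probability, to extract the factor $e^{-\half M}$ (writing $M:=M_{\bs X,\bs d,\bs q}(u,\bt)$) by a mean shift, and then to bound the remaining integral from below \emph{uniformly} in $\bt$; this last, geometric, step is where {\bf A1} is indispensable and is the main obstacle. Fix $\bt$ and put $\bs a:=u\bs q+\bs d(\bt)$, which has positive coordinates by Remark \ref{r4}. Factor $\Sigma_{\bt}=B_{\bt}B_{\bt}'$ with $B_{\bt}$ nonsingular, so $\bs X(\bt)\de B_{\bt}\cn$ and the probability equals $\pp(B_{\bt}\cn>\bs a)$. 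Let $\bs y^\star$ minimize $\scal{\bs y}{\Sigma_{\bt}^{-1}\bs y}$ over $\{\bs y\ge\bs a\}$; the minimizer exists and is unique, since the objective is strictly convex and coercive ($\Sigma_{\bt}^{-1}$ being positive-definite), and by Lemma \ref{lemma:saddle}, applied with $A=\Sigma_{\bt}$ and threshold $\bs a$, the vector $\bs w^\star:=\Sigma_{\bt}^{-1}\bs y^\star$ satisfies $\bs w^\star\ge\bs 0$ and $\scal{\bs y^\star}{\Sigma_{\bt}^{-1}\bs y^\star}=M$.

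Next I would substitute $\bs x=\bs y^\star+B_{\bt}\bs g$ in $\pp(B_{\bt}\cn>\bs a)=\int_{\{\bs x>\bs a\}}(2\pi)^{-n/2}(\det\Sigma_{\bt})^{-1/2}e^{-\frac12\scal{\bs x}{\Sigma_{\bt}^{-1}\bs x}}\,d\bs x$. Expanding the quadratic form gives $\scal{\bs x}{\Sigma_{\bt}^{-1}\bs x}=M+2\scal{\bs g}{\bs c}+\|\bs g\|^2$, where $\bs c:=B_{\bt}'\bs w^\star$ satisfies $\|\bs c\|^2=\scal{\bs w^\star}{\Sigma_{\bt}\bs w^\star}=M$, while the Jacobian cancels the factor $(\det\Sigma_{\bt})^{-1/2}$. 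I would then restrict the domain to $\mathcal C_{\bt}\cap\{\|\bs g\|\le C_1\}$, where $\mathcal C_{\bt}:=\{\bs g:B_{\bt}\bs g>\bs 0\}=B_{\bt}^{-1}\rr^n_+$, so that indeed $\bs y^\star+B_{\bt}\bs g>\bs a$ there. On $\mathcal C_{\bt}$ one has $\scal{\bs g}{\bs c}=\scal{B_{\bt}\bs g}{\bs w^\star}\ge0$, and on the ball $\scal{\bs g}{\bs c}\le C_1\|\bs c\|=C_1\sqrt M$, whence
\[
\pp(B_{\bt}\cn>\bs a)\ge(2\pi)^{-n/2}e^{-\frac12 M-C_1\sqrt M-\frac12 C_1^2}\,\Vol\big(\mathcal C_{\bt}\cap\{\|\bs g\|\le C_1\}\big).
\]
Taking logarithms yields the asserted inequality, with $C_2=\log\big[(2\pi)^{-n/2}v_0\big]-\frac12 C_1^2$, provided the uniform bound $v_0:=\inf_{\bt\in T}\Vol\big(\mathcal C_{\bt}\cap\{\|\bs g\|\le C_1\}\big)>0$ holds.

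The crux, and the step I expect to be hardest, is precisely this uniform volume bound. Since $\mathcal C_{\bt}=B_{\bt}^{-1}\rr^n_+$ is a cone, $\Vol(\mathcal C_{\bt}\cap\{\|\bs g\|\le C_1\})$ is proportional to its solid angle $\pp(\cn\in\mathcal C_{\bt})$, so it suffices to bound the latter away from $0$ uniformly in $\bt$. This is exactly the geometric meaning of {\bf A1} recorded in Remark \ref{rmk:mainassum}: the edges $B_{\bt}^{-1}\bs e_i$ of $\mathcal C_{\bt}$ meet at angles whose cosines are $k_{i,j}(\bt)$, and {\bf A1} forces $k_{i,j}(\bt)\le k^\star:=\max_{i\ne j}\sup_{\bt}k_{i,j}(\bt)<1$, so these angles stay bounded below by $\arccos k^\star>0$. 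This keeps $\mathcal C_{\bt}$ from collapsing into a thin sliver and lets one inscribe near the apex a set of volume bounded away from zero, uniformly in $\bt$; making this last assertion quantitative is the real work. Finally, $C_2$ may be taken strictly positive: for $u$ large enough $M$ is bounded below uniformly in $\bt$ (indeed $M\ge 2M_{\bs X,\bs d,\bs q}(u;T)$, which tends to $\infty$ by Lemma \ref{lem:impl}), so the surplus in the $-C_1\sqrt M$ term can be traded, by slightly enlarging $C_1$, against any prescribed positive value of $C_2$.
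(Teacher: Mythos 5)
Your argument is essentially the paper's own proof: both reduce the left-hand side to the standardized orthant probability $\pp(\cn\in B_{\bt}^{-1}Q_{\bt})$, localize near the minimizer $\bs x^\star$ with $\scal{\bs x^\star}{\bs x^\star}=M$, bound the excess in the exponent by $2\,\diam\cdot\sqrt M+\diam^2$ (your Cauchy--Schwarz step on $\scal{\bs g}{\bs c}$ with $\|\bs c\|^2=M$ is exactly this estimate), and invoke {\bf A1} through the angle interpretation of Remark \ref{rmk:mainassum} to get a volume lower bound uniform in $\bt$. The only differences are cosmetic --- you translate to the apex and intersect the cone $B_{\bt}^{-1}\rr^n_+$ with a ball, while the paper takes $\mathcal{B}(\bs x^\star,1)\cap B_{\bt}^{-1}Q_{\bt}$, and you are a bit more explicit about arranging $C_2>0$ --- and the decisive uniform solid-angle bound, which you rightly identify as the real work, is left at precisely the same level of justification as in the paper.
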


\begin{proof}
Set \[Q_{\bt}:=\{\bs x\in\rr^n:\bs x > u\bs q+\bs d(\bt)\},\] and
let $B_{\bt}$ be  such that $B_{\bt}B_{\bt}'=\Sigma_{\bt}$. Then $X(\bt)\stackrel{\rm d}{=} B_{\bt}\cn$,
where $\cn$ is an $\rr^n$-valued standard normal random variable with the density function
\[
f(\bs x)=D_n\exp\left(-\half\scal{\bs x} {\bs x}\right),
\]
for some normalizing constant $D_n$. In this notation, we have
\[
\pp\left(\bs X(\bt)-\bs d(\bt)>u\bs q\right)=\pp\left(\bs X(\bt)\in Q_{\bt}\right)
=
\pp\left(\cn\in B_{\bt}^{-1}Q_{\bt}\right).
\]
Now let $\bs x{^\star}=\bs x{^\star}(u,\bt)\in B_{\bt}^{-1}Q_{\bt}$  be such that
\[
M_{\bs X,\bs d,\bs q}(u,\bt)=\inf_{\bs x\in Q_\bt}\scal{\bs x} {\Sigma^{-1}_{\bs t}\bs x}
=\inf_{\bs x\in B_{\bs t}^{-1}Q_\bt}\scal{\bs x}{\bs x}
=\scal {\bs x{^\star}}{\bs x{^\star}},
\]
and let $A_{\bt}:=\mathcal{B}(x{^\star},1)\cap B_{\bt}^{-1}Q_{\bt}$, where
$\mathcal{B}(x{^\star},1)$ is a ball in $\rr^n$ of radius $1$ and center $x{^\star}$.
Then,
\[
\pp\left(\cn\in B_{\bt}^{-1}Q_{\bt}\right)
\ge
\int_{A_{\bt}}f(\bs x)\,d\bs x.
\]
Set $\Delta(\bs x,\bs x{^\star}):=\scal {\bs x} {\bs x}-\scal{\bs x{^\star}} {\bs x{^\star}}.$
Then
\[
\pp\left(\cn\in B_{\bt}^{-1}Q_{\bt}\right)
\ge D_n\Vol(A_{\bt})
\exp\left(-\half M_{\bs X,\bs d,\bs q}(u,\bt)-\half\sup_{\bs x \in A_\bt}\Delta(\bs x, \bs x{^\star})\right).
\]
Since
\[
\Delta(\bs x,\bs x{^\star})
\le
2\|\bs x-\bs x{^\star}\|\scal{\bs x{^\star}} {\bs x{^\star}}^{1/2}+\|\bs x-\bs x{^\star}\|^2,
\]
we have that
\[
\sup_{\bs x\in A_\bt} \Delta(\bs x,\bs x{^\star})
\le
2\diam(A_\bt)M_{\bs X,\bs d,\bs q}^{1/2}(u,\bt)+\diam^2(A_\bt).
\]
Therefore the claim follows if $\diam(A_\bt)$ and $\Vol(A_\bt)$ can be bounded
uniformly in $\bt\in T$ from above and below, respectively.

Observe that, by the construction of $A_\bt$,
$\diam(A_\bt)\le 1$.
Besides, the quadrant $Q_\bt$ is spanned by the standard basis $(\bs e_i)$ in $\rr^n$
fixed in the point $u\bs q+\bs d(\bt)$. The cosine of the angle $\alpha_{i,j}$
between $B^{-1}_\bt\bs e_i$ and $B^{-1}_\bt\bs e_j$ is given by
$\cos(\alpha_{i,j})=k_{i,j}$;
see Remark \ref{rmk:mainassum}. Under $\bf{A1}$ this angle
is bounded away from zero, uniformly in $\bt\in T$. Therefore
$\inf_{\bt\in T}\Vol(A_\bt)>0$.
This completes the proof.
\end{proof}

Now we are ready to prove the main theorem.
\begin{proof}[Proof of Theorem \ref{thm:main}]
Put $P(u):=\pp\left(\exists{\bt\in T}:\bs X(\bt)-\bs d(\bt)>u\bs q\right)$.
We split the proof into two parts: the lower and the upper bound.\\
Lower bound: The lower bound follows directly from Lemma \ref{lemma:lower}
and the inequality
\[
\log P(u)\ge\sup_{\bt\in T}\log\pp\left(\bs X(\bt)-\bs d(\bt)>u\bs q\right).
\]
Upper bound:
Let $\bs w{^\star}:\rr_+\times T\to\rr^n_+$ be the mapping chosen in \eqref{eq:w}.
Now as in the definition of the process $Y_u$,
\begin{align*}
P(u)
&\le
\pp\left(\exists{\bt\in T}: \scal{\bs w{^\star}}{\bs X(\bt)} >\scal{\bs w{^\star}}{u\bs q+\bs d(\bt)}\right)\\
&=
\pp\left(\sup_{\bt\in T}\frac{\scal{\bs w{^\star}}{\bs X(\bt)}}{\scal{\bs w{^\star}}{u\bs q+\bs d(\bt)}}> 1\right)
=
\pp\left(\sup_{\bt\in T}Y_u(\bt)> 1\right),
\end{align*}
where the passage from the $n$-dimensional quadrant to the tangent increases the probability.
Recall that the variance  $\sigma^2_u(\bt)$ of $Y_u(\bt)$ equals $(M_{\bs X,\bs d,\bs q}(u,\bt))^{-1}$; cf. \eqref{eq:Yvar}.
Moreover, thanks to Lemma \ref{lem:impl}, the Gaussian process $Y_u$ has bounded sample paths almost surely.
Therefore, Borell's inequality implies that
\[
\pp\left(\sup_{\bt\in T}Y_u(\bt)> 1\right)
\le
2\exp\left(-\left(1-\ee\sup_{\bt\in T}Y_u(\bt)\right)^2M_{\bs X,\bs d,\bs q}(u;T)\right).
\]
Now from (2) and (3) of Lemma \ref{lem:impl} we obtain
\[
\limsup_{u\toi}\frac{\log\pp\left(\sup_{\bt\in T}Y_u(\bt)> 1\right)}{M_{\bs X,\bs d,\bs q}(u;T)}\le -1
\]
and the claim follows.
\end{proof}

\begin{rmk}
From the proof of the upper bound we obtain the  useful inequality
\[
\pp\left(\exists{\bt\in T}:\bs w{^\star}X(\bt)>\bs w{^\star}(u\bs q+\bs d(\bt))\right)
\le
\pp\left(\exists{\bt\in T}: \scal{\bs w{^\star}}{\bs X(\bt)} >\scal{\bs w{^\star}}{u\bs q+\bs d(\bt)}\right),
\]
which we have proven to be exact in terms of logarithmic asymptotics.
Let $\bs v{^\star}\equiv\bs v{^\star}(u,\bt)$ be such that
\[
\scal {\bs v{^\star}+\bs d(\bt)} {\Sigma_{\bt}^{-1} (\bs v{^\star}+\bs d(\bt))}=
\inf_{\bs v\ge u\bs q}\scal {\bs v+\bs d(\bt)} {\Sigma_{\bt}^{-1} (\bs v+\bs d(\bt))}.
\]
Then the optimal weights $\bs w{^\star}$ are given by $\bs w{^\star}(u,\bt)=\Sigma_{\bt}^{-1}\bs v{^\star}(u,\bt),$
or alternatively, due to Lemma \ref{lemma:saddle}, by
\[
\bs w{^\star}(u,\bt)=\arg\sup_{\bs w\in \rr^n_+}\frac{\scal{\bs w}{u\bs q+\bs d(\bt)}^2}{\scal{\bs w}{\Sigma_{\bt}\bs w}}.
\]
Observe that the weights do not depend on $u$ in the case of $\bs d\equiv \bs 0$.
\end{rmk}

\newpage
\bibliography{Gaussian}
\end{document}